\tikzset{
  commutative diagrams/.cd, 
  arrow style=tikz, 
  diagrams={>=stealth}
}
\newenvironment{customthm}[1]
  {\innercustomthm}
  {\endinnercustomthm}
\def\@tocline#1#2#3#4#5#6#7{\relax
  \ifnum #1>\c@tocdepth % then omit
  \else
    \par \addpenalty\@secpenalty\addvspace{#2}%
    \begingroup \hyphenpenalty\@M
    \@ifempty{#4}{%
      \@tempdima\csname r@tocindent\number#1\endcsname\relax
    }{%
      \@tempdima#4\relax
    }%
    \parindent\z@ \leftskip#3\relax \advance\leftskip\@tempdima\relax
    \rightskip\@pnumwidth plus4em \parfillskip-\@pnumwidth
    #5\leavevmode\hskip-\@tempdima
      \ifcase #1
       \or\or \hskip 1em \or \hskip 2em \else \hskip 3em \fi%
      #6\nobreak\relax
    \dotfill\hbox to\@pnumwidth{\@tocpagenum{#7}}\par
    \nobreak
    \endgroup
  \fi}
\newcounter{marginnote}
\DeclareMathAlphabet{\mathpzc}{OT1}{pzc}{m}{it}
\newtheorem{theorem}{Theorem}[subsection]
\newtheorem{lemma}[theorem]{Lemma}
\newtheorem{proposition}[theorem]{Proposition}
\newtheorem{quasi-theorem}[theorem]{Quasi-Theorem}
\theoremstyle{definition}
\newtheorem{definition}[theorem]{Definition}
\newtheorem{example}[theorem]{Example}
\newtheorem{blank remark}[theorem]{}
\theoremstyle{remark}
\newtheorem{rem1}[theorem]{Remark}
\newenvironment{remark}{\begin{rem1}\em}{\end{rem1}}
\newtheorem{not1}[theorem]{Notation}
\newcommand{\QQ} {{\mathbb Q}}		
\newcommand{\RR} {{\mathbb R}}
\DeclareMathOperator{\spec}{Spec}
\newcommand{\cal}{\mathcal}
\def\cM{{\cal M}}
\def\fM{\mathfrak{M}}
\newcommand{\plC}{\scalebox{0.8}[1.3]{$\sqsubset$}}
\newcommand{\Mbar}{\overline{\cM}\vphantom{\cM}}
\def\blfootnote{\xdef\@thefnmark{}\@footnotetext}
\title{A note on the cycle of curves in a product of pairs}
\date{}
\author{Dhruv Ranganathan}
\address{Department of Pure Mathematics {\it \&} Mathematical Statistics\\
University of Cambridge, UK}
\email{\href{mailto:dr508@cam.ac.uk}{dr508@cam.ac.uk}}
\begin{document}

\maketitle

\begin{abstract}
We prove that the cycle-valued logarithmic Gromov--Witten theory of a product of simple normal crossings pairs $X\times Y$ decomposes into a product of pieces coming from $X$ and $Y$, provided that the decomposition is considered over a blowup of the moduli space of curves. The result is established by applying the toroidal semistable reduction theorem to the stabilization morphism on the stack of maps to the Artin fans. Since products of smooth pairs are naturally simple normal crossings pairs, the result provides a direct avenue of access to questions in logarithmic Gromov--Witten theory via relative Gromov--Witten theory. 
\end{abstract}

%\eject

\section{Introduction}

Logarithmic Gromov--Witten theory concerns cycles on moduli of maps
\[
(C,p_1,\ldots, p_n)\to (Z,D_Z)
\] 
from pointed curves to a simple normal crossings pair. The logarithmic structure on such a morphism prescribes the contact orders of each point $p_i$ along each divisor $D_j$. The contact order is equal to the scheme theoretic tangency order for non-degenerate maps, but is well-defined for arbitrary logarithmic maps and remains locally constant in families. The resulting moduli space $\mathsf K_\Gamma(Z)$ is equipped with a virtual fundamental class. Enumerative invariants are integrals of evaluation cycles from $Z$ and tautological cohomology classes against the virtual class~\cite{AC11,Che10,GS13}. When the divisor $D_Z$ is smooth, the theory is equivalent to Li's relative Gromov--Witten theory~\cite{Li01}. The latter is a subject in which researchers have discovered structural results which remain mysterious in logarithmic Gromov--Witten theory, see for instance~\cite{FP,FWY20,GV05,JPPZ2,MP06}. 

Our interest in this paper is the logarithmic Gromov--Witten theory of products of pairs. Products of nontrivial smooth pairs are simple normal crossings but never smooth, and we view such products as a prime testing ground for the structure of the logarithmic theory. %Second, as logarithmic Gromov--Witten theory is insensitive to birational modifications, a variety need only be birational to a product for the result to apply~\cite{AW}. 

The contribution of this article is a product formula in logarithmic Gromov--Witten theory, generalizing a theorem of Behrend~\cite{Beh97}. The novelty in the paper is that the naive generalization of the result is demonstrably false, and we offer a geometric explanation of this failure in the main text and explain how to correct it. %Finally, we propose a conjecture about cycles in the moduli space of curves produced from logarithmic Gromov--Witten theory, and provide evidence for it in the case of products. 

\subsection{Product rules} Let $(X,D_X)$ and $(Y,D_Y)$ be smooth projective varieties equipped with simple normal crossings divisors, and assume that all non-empty intersections of components of $D_X$ and of $D_Y$ are connected. The product is a simple normal crossings pair:
\[
Z = X\times Y, \ \ \ \ D_Z = \pi_X^{-1}(D_X)\cup\pi_Y^{-1}(D_Y)
\]
where $\pi_X$ and $\pi_Y$ are the two natural projections. Enumerate the irreducible logarithmic divisors in $Z$ by $D_1,\ldots, D_r$ in $Z$. We fix discrete data for maps from curves to $Z$: 
\begin{enumerate}[(i)]
\item the genus of the source curve $g$,
\item the curve class $\beta$ in the homology group $H_2(Z)$,
\item the number $n$ of labelled marked points, which will be denoted $p_1,\ldots, p_n$,
\item the positive integers $c_{ij}$ giving the contact order of the point $p_i$ with the divisor $D_j$.
\end{enumerate}
The entirety of the discrete data will be packaged in the symbol $\Gamma$. The curve class $\beta$ determines compatible classes on each factor by the K\"unneth formula. The components of the divisors on $Z$ are pulled back along one of the two projections, so for each $p_i$ the integers $c_{ij}$ determine compatible contact orders for maps to $X$ and $Y$. We obtain compatible discrete data for maps to $X$ and maps to $Y$. There is a moduli problem on logarithmic schemes parameterizing families of logarithmic maps from logarithmic curves to a fixed logarithmic scheme, whose underlying morphisms are stable in the traditional sense. The moduli problem is representable by a proper algebraic stack equipped with a logarithmic structure and a virtual fundamental class~\cite{AC11,Che10,GS13}. 

We assume that the genus and marking numbers lie in the stable range. As we explain in the main text, for logarithmic mapping stacks $\mathsf K$, there is a simple construction that produces logarithmic modifications
\[
\mathsf K^\dagger\to \mathsf K,
\]
together with a virtual class on $\mathsf K^\dagger$ that pushes down to the virtual class $\mathsf K$. We note that the logarithmic modifications considered here are obtained by pulling back birational modifications along a map from $\mathsf K$ to a smooth space, and in particular, $\mathsf K^\dagger$ need not actually be birational to $\mathsf K$. It is best thought of as a ``virtual'' birational model. 

Our main result explains that the product rule in logarithmic Gromov--Witten theory holds provided the moduli spaces are replaced by appropriate virtual birational models. Without passing to such modifications, the product rule fails; see the discussion after the theorem statement. We maintain the notation above: the target pairs are $X$ and $Y$, their product is $Z$, and the discrete data $\Gamma$ is fixed. 

\begin{customthm}{A}\label{thm: product-formula}
There exists an explicit logarithmic modification of the moduli space of stable curves
\[
\begin{tikzcd} 
\Mbar_{\Gamma}\arrow{r}{\pi} &\Mbar_{g,n}
\end{tikzcd} 
\]
and virtual birational models $\mathsf K_\Gamma(W)^\dagger\to\mathsf K_\Gamma(W)$ for each $W = X,Y,Z$, fitting into a diagram
\[
\begin{tikzcd}
\mathsf K_\Gamma(Z)^\dagger\arrow{r}{\vartheta} & \mathsf P_\Gamma(X\times Y) \arrow{d}\arrow{r}\arrow[dr, phantom, "\square"] & \mathsf K_\Gamma(X)^\dagger\times \mathsf K_\Gamma(Y)^\dagger \arrow{d} \\
&\Mbar_{\Gamma}\arrow{r}[swap]{\Delta} &\Mbar_{\Gamma}\times \Mbar_{\Gamma}. & 
\end{tikzcd}
\]
There is an equality of virtual classes
\[
\vartheta_\star [\mathsf K_\Gamma(Z)^\dagger]^{\mathrm{vir}} = \Delta^![\mathsf K_\Gamma(X)^\dagger\times \mathsf K_\Gamma(Y)^\dagger]^{\mathrm{vir}} \ \ \mathrm{in} \ \ A_\star(\mathsf P_\Gamma(X\times Y);\QQ).
\]
\end{customthm}

\subsection{Discussion and context} The product formula in ordinary Gromov--Witten theory is a basic property, analogous to a K\"unneth formula, proved by Behrend in a paper that shortly followed the foundations of the virtual fundamental class in algebraic geometry~\cite{Beh97}. The result has had applications, for example to the enumerative geometry of compact Calabi--Yau geometries~\cite{OP18}, and forms a basic exploratory tool in the subject. For example, the Gromov--Witten theory of target curves is known to be tautological~\cite{Jan17}, and the product formula immediately exhibits an infinite class of algebraic surfaces for which the same is true. It is this flavour of result that forms the motivation for the theorem here. Indeed, together with the techniques developed in our follow up~\cite{MR21} and Janda's work on the Gromov--Witten theory of target curves~\cite{Jan17}, the product formula should yield the following result.

\noindent
{\bf Speculation.} {\it Let $X$ and $Y$ be smooth curves , each equipped with a logarithmically smooth structure. Then all logarithmic Gromov--Witten cycles of $X\times Y$ lie in the tautological ring of the moduli space of curves.}

The first exploration of a product formula in relative Gromov--Witten theory appears to be work of Lee and Qu, who prove that the simplest form of the product rules holds provided one factor in the product carries no logarithmic divisor, i.e. in the situation where the product is itself a smooth pair~\cite{LQ18}. Although restrictive, Lee and Qu's result is sharp -- if both factors have non-trivial logarithmic structure, the naive product formula typically fails. These failures are dramatic and commonplace, although it has taken some time for examples to appear in the literature.\footnote{An explicit counterexample was presented by the author in the working seminar on bChow intersections run by Holmes and Schmitt with notes available here~\url{https://johannesschmitt.gitlab.io/bChowSeminar.html}. A detailed explanation of this example is found in~\cite[Section~5.1]{CHL20}. The examples in~\cite[Section~1]{NR19} can also be adapted to this setting.}

The point that we wish to illustrate in this paper is that the failure of the product formula can be traced to the stabilization morphism
\[
\mathsf K_\Gamma(X)\to \Mbar_{g,n}.
\]
If $X$ has trivial logarithmic structure the stabilization is ``virtually flat''. Once $X$ acquires logarithmic structure, the map is only ``virtually logarithmically smooth'', and not virtually flat.\footnote{These notions are not standard in the literature but here we simply mean a morphism that factors as a strict logarithmic morphism with a perfect relative obstruction theory following by a flat resp. logarithmically smooth map.} The failure can be patched by a semistable reduction procedure, which restores the product rule. 

\subsection{Nearby mathematics} The results of this paper were understood contemporaneously with the author's work on the degeneration formula~\cite{R19}, where virtually flat tautological morphisms become poorly behaved logarithmically. The present paper was initially written in 2019 as a simpler companion paper to loc. cit. that demonstrated parallel ideas. It has undergone expository restructuring in the years since. 

A logarithmic product rule has been proved by Herr~\cite{Herr}. The approach is complementary; Herr's work establishes a statement on the moduli spaces themselves rather than their blowups, but does not equate cycles in ordinary Chow homology. In short, we change the moduli spaces, while he changes the intersection theory. We believe Herr's formalism has conceptual value, but the present result is more effective for numerical consequences. The strong product decomposition for rational curves in the logarithmic torus has a similar nature to Herr's work~\cite{RW19}.

When specializing to genus $0$ curves in convex targets, there is a ``relative'' version of the product formula, relating the pairs $(X,D)$ and $(X,E)$ with $(X,D+E)$ by studying the fiber product of the moduli space of maps to $X$. This was examined and utilized in work with Nabijou based on the strategy outlined here~\cite{NR19}. Just as the naive product formula fails, this relative version also fails. The failure lies at the heart of counterexamples to the local/logarithmic conjectures, while the blowup geometry leads to an appropriately correction version~\cite{vGGR}. The geometry in the work with Nabijou primarily concerns the ``maximal contact'' geometry, and the virtual birational models constructed there are significantly more explicit than we produce here. 

\subsection{Recent progress} When the target geometry is $(\mathbb P^1)^r$, but as a ``rubber target'' for the rank $r$ torus action, the corrected product rule on a blowup was obtained using Abel--Jacobi theory by Holmes--Pixton--Schmitt~\cite{HPS19}. The cycles in $\Mbar_{g,n}$ obtained from these constructions -- dubbed the toric contact cycles -- have also seen recent interest. Independently, Holmes and Schwarz and Molcho and the author have shown that these cycles lie in the tautological ring~\cite{HS21,MR21}. Our work with Molcho builds on the strategy laid out in the first ar$\chi$iv version of this paper -- to understand the virtual strict transform operation to access the product rule. A brute force calculation scheme for handling virtual strict transforms is outlined in~\cite{MR21}. The picture was considerably refined in~\cite{HMPPS}. 

Finally, there has been significant recent interest in approaching the enumerative geometry of logarithmic pairs using roots rather than logarithmic structures~\cite{TY20}. The product formula is known to hold in the Gromov--Witten theory of stacks~\cite{AJT}, and this can be taken to be a simple explanation for why the logarithmic and orbifold theories do not coincide. In genus $0$, the situation is now well-understood, and product rules again play a fundamental role~\cite{BNR2}.

\subsection*{Acknowledgements}  My view on product geometries was shaped by discussions with friends and colleagues including D. Abramovich, T. Graber, A. Gross, D. Maulik, S. Molcho, N. Nabijou, and R. Pandharipande. I am especially grateful to D. Abramovich for the comment ``it would all be fine if the stabilization morphism was flat, but it probably fails in general'', which clarified the geometry. I thank H. Wise for keeping me informed of the work of L. Herr, and to D. Holmes and J. Schmitt for comments on a draft and for the opportunity to present the work at their seminar on bChow Intersections in 2020.

\noindent
{\bf Funding.} The author is supported by EPSRC grant EP/V051830/1. 

\section{Logarithmic GW theory}

We work over the complex numbers throughout; the logarithmic schemes appearing will all be fine and saturated. 

\subsection{Target geometry} Let $W$ be a simple normal crossings pair with divisor $D_W$. Denote the divisor components by $D_1,\ldots, D_r$. Assume that every intersection of a subset of the divisor components of $D_W$ is connected. Each divisor component determines a tautological map 
\[
W\to [\mathbb A^1/\mathbb G_m]:=\mathsf A,
\]
since a map to the codomain is determined by a line bundle and a section. Ranging over the components, we obtain a map
\[
W\to \mathsf A^r.
\]
The target may also be equipped with a natural logarithmic structure using the simple normal crossings divisor obtained as the complement of the open point. Once it is so equipped, the map above becomes strict. 

\begin{remark}
If the divisor $D_W$ is merely normal crossings, or more generally toroidal, one may replace $\mathsf A^r$ with the Artin fan~\cite{AW}. However, the condition imposed here is always satisfied after strata blowups, and strata blowups do not affect logarithmic Gromov--Witten theory. 
\end{remark}

\subsection{Mapping spaces} We can consider the moduli space of logarithmic maps to $W$ and to $\mathsf A^r$. These are proven to be representable by algebraic stacks in~\cite{AC11,GS13} for logarithmically smooth targets such as $W$; the algebraicity of logarithmic maps to $\mathsf A^r$ is established in~\cite{AW}. 

We fix numerical data $\Gamma$ of genus, number of markings, contact orders, and in the case of $W$, the curve class. The map $W\to \mathsf A^r$ induces a strict map on the mapping spaces:
\[
\mathsf K_\Gamma(W)\to \mathsf K_\Gamma(\mathsf A^r). 
\]
We impose a stability condition on the left, i.e. that the underlying maps to $W$ are stable. No such restriction is sensible on the right. However, we work with the non-separated cover of $\mathsf K_\Gamma(\mathsf A^r)$ with decorations by curve class, i.e. it formally includes the data of a curve class in the monoid $H^+_2(W)$ of effective curve classes in $W$, one for each component, adding to the fixed class recorded by $\Gamma$. Details of these decorated spaces may be found for instance in~\cite[Section~2.2]{HL10}. The construction there exhibits a space $\mathfrak M^{\mathsf{wt}}$ as a non-separated cover over $\mathfrak M_g$, and we pullback the stack of maps to $\mathsf A^r$ along this cover to form $\mathsf K_\Gamma(\mathsf A^r)$.

The fundamental structure in logarithmic Gromov--Witten theory is the following. 

\begin{theorem}
The morphism
\[
\mathsf K_\Gamma(W)\to \mathsf K_\Gamma(\mathsf A^r). 
\]
is equipped with a relative perfect obstruction theory given by $R^\bullet \pi_\star F^\star T_W^{\mathsf{log}}$, where $\pi$ and $F$ denote the universal curve and universal map respectively. 
\end{theorem}

A theorem equivalent to the one above is proved by Gross and Siebert~\cite[Section~5]{GS13}, and the equivalence to the form above is established in~\cite[Section~6]{AW}. Note that if $D$ is empty, the target of morphism above is $\mathfrak M_{g,n}$ of curves. 

\subsection{Products in different categories} Let $X$ and $Y$ be simple normal crossings pairs and let $Z = X\times Y$. The stack of logarithmic maps (respectively to $Z$, $X$, or $Y$) represents the stack over logarithmic schemes whose fiber over a logarithmic scheme $S$ is the groupoid of logarithmic maps (respectively to $Z$, $X$, or $Y$) from families of logarithmic curves over $S$. As a formal consequence, the following commutative diagram of stacks over the category of fine and saturated logarithmic schemes is a fiber product:
\[
\begin{tikzcd}
\mathsf K(Z)\arrow{d}\arrow{r} & \mathsf K(X)\arrow{d}\\
\mathsf K(Y)\arrow{r} & \fM_{g,n}.
\end{tikzcd}
\]
See~\cite[Section~2]{AC11} for a discussion. A scheme or stack with a fine and saturated logarithmic structure determines a functor/fibered category on the category logarithmic schemes. Each of the fibered categories in the above diagram is represented, in this sense, by a stack with a logarithmic structure, it \textit{does not} follow that the fiber product of $\mathsf K(X)$ and $\mathsf K(Y)$ over $\fM_{g,n}$ as ordinary stacks is equal to $\mathsf K(Z)$. 

The issues are also present over the moduli space of stable curves. If $3g-3+n$ is non-negative, we can examine the fiber products
\[
\mathsf K(X)\times_{\Mbar_{g,n}}\mathsf K(Y) \ \ \ \textsf{vs.} \ \ \ \ \left(\mathsf K(X)\times_{\Mbar_{g,n}}\mathsf K(Y)\right)^{\mathsf{fs}},
\]
where the right hand side is the fine and saturated fiber product. A priori, there is no reason for these to coincide, and indeed they typically do not. We capture the failure of the product formula as coming from the following basic fact:
\begin{center}
\fbox{
\parbox{0.8\linewidth}{
\it The functor of maps to $X\times Y$ is given by a fine and saturated logarithmic fiber product  of $\mathsf K(X)$ and $\mathsf K(Y)$ over the moduli space of curves, but the intersection product calculates the scheme theoretic fiber product.}
}
\end{center}

When combined with tropical geometry, intersection theory may still be applied to calculate classes of fine and saturated fiber products, and a blueprint for this has been developed in~\cite{MR21}. 

\subsection{When do the two fiber products coincide?} Generally, given logarithmic schemes $X$, $Y$, and $B$, with maps $X\to B$ and $Y\to B$ there is a natural condition that guarantees that $X\times_B Y$ and $(X\times_B Y)^{\mathsf{fs}}$ coincide. Specifically, this occurs if $X\to B$ is a \textit{saturated} morphism of logarithmic schemes in the sense of~\cite{Tsu19}. We do not need to go into the details of the definition, but the following gives a geometric source of saturated morphisms that will in fact suffice for our purposes here. 

\begin{example}
Let $X\to B$ be a logarithmically smooth morphism of logarithmically smooth schemes (or stacks). If the morphism is flat with reduced fibers then it is saturated, see~\cite[Section~2.1]{Mol16} for a proof. In particular, a dominant equivariant morphism of toric varieties with smooth target, reduced fibers of constant dimension is a saturated morphism with the standard toric logarithmic structures. Conversely, a saturated logarithmically smooth morphism of logarithmically smooth schemes necessarily has equidimensional fibers. 
\end{example}

\subsection{Forgetful maps and their pathologies} We have explained that ordinary and scheme theoretic fiber products of logarithmic schemes do not coincide and the relevance this has for mapping spaces to product targets. We now record an elementary and explicit example showing that forgetful morphisms are typically not saturated. 

The logarithmic mapping stack is equipped with a forgetful morphism to the space of curves:
\[
\mathsf K_\Gamma(W)\to \Mbar_{g,n}.
\]
The morphism above factors as
\[
\mathsf K_\Gamma(W)\to \mathsf K_\Gamma(\mathsf A^r)\to \fM_{g,n}\to \Mbar_{g,n}.
\]
The first morphism is strict, and so for the purposes of analyzing the difference between logarithmic and ordinary fiber products, it can essentially be ignored. 

\begin{lemma}
The stabilization morphism 
\[
\mathsf K_\Gamma(\mathsf A^r)\to \Mbar_{g,n}
\]
is logarithmically smooth. 
\end{lemma}

\begin{proof}
The morphism $\mathsf K_\Gamma(\mathsf A^r)\to \fM_{g,n}$ is shown to be logarithmically \'etale in~\cite[Section~3]{AW}. After passing to a cover of $\mathfrak M_{g,n}$ by stacks of stable curves with more marked points, the stabilization morphism from $\fM_{g,n}$ to $\Mbar_{g,n}$ is given locally by forgetting marked points. The forgetful morphisms are logarithmically smooth so the result follows. 
\end{proof}

As we will see, the product formula does not hold in logarithmic Gromov--Witten theory, but it does hold in ordinary Gromov--Witten theory. The reason for this can be traced to a basic phenomenon. In the notation above, the map
\[
\mathsf K_\Gamma(\mathsf A^r)\to \Mbar_{g,n}
\]
has dramatically different behaviour when $r = 0$ as compared to when $r$ is positive. 

\noindent
{\bf Key Observation.} If $r = 0$ the stabilization morphism 
\[
\mathsf K_\Gamma(\mathsf A^0) = \fM_{g,n} \to \Mbar_{g,n}
\]
is flat. When $r>0$ there exist discrete data $\Gamma$ such that the stabilization morphism need not be flat. Similarly, the forgetful morphism to $\fM_{g,n}$ has parallel issues: it is always logarithmically smooth but almost never flat. 

The observation is justified by the following example. We focus on the space of stable curves, but in fact the example below justifies the statement for both targets. 

\begin{example}
Consider the moduli stack $\mathsf K_\Gamma(\mathsf A)$, with discrete data as follows: the genus $g$ is taken to be $0$, the number of marked points is $5$ each with contact order $0$. Let $C$ be a genus $0$ curve over $\spec \mathbb C$. Consider logarithmic maps of the following form. The curve $C$ is a union of three irreducible curves
\[
C = C_1\cup C_0\cup C_2,
\]
where all components on the right are smooth rational curves, such that $C_0\cap C_1$ and $C_0\cap C_2$ are both nodes, and there are no other nodes. Distribute two points to $C_1$ and $C_2$ and one point to $C_0$ ensuring all components are stable. We can construct a map
\[
C\to \mathsf A
\]
sends $C_0$ to the $B\mathbb G_m$ point in $\mathsf A$, and the complement of the node for each of $C_1$ and $C_2$ maps to the complement of the $B\mathbb G_m$ point. The construction can be carried out as follows. A morphism
\[
C\to \mathsf A
\]
is a the data of a line bundle and section. If $C$ is a logarithmic curve over a $P$-logarithmic point for some monoid $P$, then there is an associated tropicalization
\[
\plC\to \sigma_P,
\]
where $P$ is the dual cone. The object $\plC$ is a cone complex and by elementary toric geometry, a piecewise linear function
\[
\plC \to\RR_{\geq 0}
\]
determines the data of a line bundle and section. A logarithmic map from $C$ to $\mathsf A$ is precisely a line bundle and section that arises in this way. So given a logarithmic curve $C$ over a $P$-logarithmic closed point, a logarithmic map is precisely a piecewise linear function on the tropicalization. In our example, we choose the piecewise linear function to be the following one:
\begin{figure}[h!]

\tikzset{every picture/.style={line width=0.75pt}} %set default line width to 0.75pt        

\begin{tikzpicture}[x=0.75pt,y=0.75pt,yscale=-1,xscale=1]
%uncomment if require: \path (0,653); %set diagram left start at 0, and has height of 653

%Straight Lines [id:da7376353213705786] 
\draw    (230,201) -- (377,200.02) ;
\draw [shift={(380,200)}, rotate = 179.62] [fill={rgb, 255:red, 0; green, 0; blue, 0 }  ][line width=0.08]  [draw opacity=0] (10.72,-5.15) -- (0,0) -- (10.72,5.15) -- (7.12,0) -- cycle    ;
\draw [shift={(230,201)}, rotate = 359.62] [color={rgb, 255:red, 0; green, 0; blue, 0 }  ][fill={rgb, 255:red, 0; green, 0; blue, 0 }  ][line width=0.75]      (0, 0) circle [x radius= 3.35, y radius= 3.35]   ;
%Straight Lines [id:da32084563819557377] 
\draw    (230,100) -- (310,130) ;
\draw [shift={(310,130)}, rotate = 20.56] [color={rgb, 255:red, 0; green, 0; blue, 0 }  ][fill={rgb, 255:red, 0; green, 0; blue, 0 }  ][line width=0.75]      (0, 0) circle [x radius= 3.35, y radius= 3.35]   ;
\draw [shift={(230,100)}, rotate = 20.56] [color={rgb, 255:red, 0; green, 0; blue, 0 }  ][fill={rgb, 255:red, 0; green, 0; blue, 0 }  ][line width=0.75]      (0, 0) circle [x radius= 3.35, y radius= 3.35]   ;
%Straight Lines [id:da9550828346903846] 
\draw    (230,150) -- (310,130) ;
\draw [shift={(310,130)}, rotate = 345.96] [color={rgb, 255:red, 0; green, 0; blue, 0 }  ][fill={rgb, 255:red, 0; green, 0; blue, 0 }  ][line width=0.75]      (0, 0) circle [x radius= 3.35, y radius= 3.35]   ;
\draw [shift={(230,150)}, rotate = 345.96] [color={rgb, 255:red, 0; green, 0; blue, 0 }  ][fill={rgb, 255:red, 0; green, 0; blue, 0 }  ][line width=0.75]      (0, 0) circle [x radius= 3.35, y radius= 3.35]   ;
%Straight Lines [id:da877308666521825] 
\draw    (420,130) -- (420,178) ;
\draw [shift={(420,180)}, rotate = 270] [color={rgb, 255:red, 0; green, 0; blue, 0 }  ][line width=0.75]    (10.93,-3.29) .. controls (6.95,-1.4) and (3.31,-0.3) .. (0,0) .. controls (3.31,0.3) and (6.95,1.4) .. (10.93,3.29)   ;
%Straight Lines [id:da939492699197416] 
\draw    (230,100) -- (220,80) ;
%Straight Lines [id:da9560865335938908] 
\draw    (230,150) -- (220,130) ;
%Straight Lines [id:da809585027314788] 
\draw    (310,130) -- (310,110) ;
%Straight Lines [id:da9346099606895686] 
\draw    (230,100) -- (240,80) ;
%Straight Lines [id:da1306224562368301] 
\draw    (230,150) -- (240,130) ;

% Text Node
\draw (411,102.4) node [anchor=north west][inner sep=0.75pt]    {$\ \plC$};
% Text Node
\draw (408,190.4) node [anchor=north west][inner sep=0.75pt]    {$\mathbb{R}_{\geq 0}$};
% Text Node
\draw (271,92.4) node [anchor=north west][inner sep=0.75pt]    {$1$};
% Text Node
\draw (272,143.4) node [anchor=north west][inner sep=0.75pt]    {$1$};
\end{tikzpicture}
\end{figure}
In the figure above, each edge maps onto the target with slope $1$. We claim that the locus parameterizing maps of this form in the space $\mathsf K_\Gamma(\mathsf A)$ is a divisor. Tropically, this can be seen simply from the fact that the tropical moduli here is only the choice of edge length for the two edges; the lengths have to be the same in order for the function to be continuous. Geometrically, we can see this by observing that although there are two nodes, the nodes must be smoothed simultaneously in any logarithmic deformation of the map. 

The divisorial locus here maps to a codimension $2$-stratum in the moduli space of stable curves, in this case $\Mbar_{0,5}$.  The morphism
\[
\mathsf K_\Gamma(\mathsf A)\to \Mbar_{0,5}
\]
is a logarithmically smooth morphism such that the preimage of a codimension $2$ stratum in the target is a divisor on the source. It follows that the morphism cannot be equidimensional.

The failure of flatness can be seen tropically. Indeed, if we remove all other divisors from $\mathsf K_\Gamma(\mathsf A)$ except for the one above, the cone complex of the resulting logarithmically smooth stack $\mathsf K_\Gamma^\circ(\mathsf A)$ is a single ray. The cone complex of $\Mbar_{0,5}$ is a $2$-dimensional fan containing a $2$-dimensional cone corresponding to curves of the type $\plC$ given above. The map on cone complexes is given in the figure below. One immediately sees the violation of flatness. Equidimensional logarithmically smooth morphisms have the property that in the induced map of cone complexes, every cone of the source maps surjectively onto a cone of the target, which is clearly violated here~\cite[Section~4]{AK00}.  
\begin{figure}[h!]
\tikzset{every picture/.style={line width=0.75pt}} %set default line width to 0.75pt        

\begin{tikzpicture}[x=0.75pt,y=0.75pt,yscale=-1,xscale=1]
%uncomment if require: \path (0,653); %set diagram left start at 0, and has height of 653

%Shape: Grid [id:dp0036653019811554444] 
\draw  [draw opacity=0][dash pattern={on 0.84pt off 2.51pt}][line width=0.75]  (347.37,162.35) -- (346.81,2.06) -- (507,1.5) -- (507.56,161.79) -- cycle ; \draw  [dash pattern={on 0.84pt off 2.51pt}][line width=0.75]  (347.37,162.35) -- (507.56,161.79)(347.3,142.35) -- (507.49,141.79)(347.23,122.35) -- (507.42,121.79)(347.16,102.35) -- (507.35,101.79)(347.09,82.35) -- (507.28,81.79)(347.02,62.35) -- (507.21,61.79)(346.95,42.35) -- (507.14,41.79)(346.88,22.35) -- (507.07,21.79)(346.81,2.35) -- (507,1.79) ; \draw  [dash pattern={on 0.84pt off 2.51pt}][line width=0.75]  (347.37,162.35) -- (346.81,2.06)(367.37,162.28) -- (366.81,1.99)(387.37,162.21) -- (386.81,1.92)(407.37,162.14) -- (406.81,1.85)(427.37,162.07) -- (426.81,1.78)(447.37,162) -- (446.81,1.71)(467.37,161.93) -- (466.81,1.64)(487.37,161.86) -- (486.81,1.57)(507.37,161.79) -- (506.81,1.5) ; \draw  [dash pattern={on 0.84pt off 2.51pt}][line width=0.75]   ;
%Straight Lines [id:da10318510584449658] 
\draw    (347.37,162.35) -- (346.82,5.35) ;
\draw [shift={(346.81,2.35)}, rotate = 89.8] [fill={rgb, 255:red, 0; green, 0; blue, 0 }  ][line width=0.08]  [draw opacity=0] (8.93,-4.29) -- (0,0) -- (8.93,4.29) -- cycle    ;
\draw [shift={(347.37,162.35)}, rotate = 269.8] [color={rgb, 255:red, 0; green, 0; blue, 0 }  ][fill={rgb, 255:red, 0; green, 0; blue, 0 }  ][line width=0.75]      (0, 0) circle [x radius= 3.35, y radius= 3.35]   ;
%Straight Lines [id:da2470550535556184] 
\draw    (347.37,162.35) -- (504.37,161.8) ;
\draw [shift={(507.37,161.79)}, rotate = 179.8] [fill={rgb, 255:red, 0; green, 0; blue, 0 }  ][line width=0.08]  [draw opacity=0] (8.93,-4.29) -- (0,0) -- (8.93,4.29) -- cycle    ;
%Straight Lines [id:da2450989612959067] 
\draw    (119.73,162.2) -- (226.88,25.56) ;
\draw [shift={(228.73,23.2)}, rotate = 128.1] [fill={rgb, 255:red, 0; green, 0; blue, 0 }  ][line width=0.08]  [draw opacity=0] (8.93,-4.29) -- (0,0) -- (8.93,4.29) -- cycle    ;
\draw [shift={(119.73,162.2)}, rotate = 308.1] [color={rgb, 255:red, 0; green, 0; blue, 0 }  ][fill={rgb, 255:red, 0; green, 0; blue, 0 }  ][line width=0.75]      (0, 0) circle [x radius= 3.35, y radius= 3.35]   ;
%Straight Lines [id:da5776418899937767] 
\draw  [dash pattern={on 3.75pt off 3pt on 7.5pt off 1.5pt}]  (347.37,162.35) -- (504.69,3.92) ;
\draw [shift={(506.81,1.79)}, rotate = 134.8] [fill={rgb, 255:red, 0; green, 0; blue, 0 }  ][line width=0.08]  [draw opacity=0] (8.93,-4.29) -- (0,0) -- (8.93,4.29) -- cycle    ;
%Curve Lines [id:da378780788377568] 
\draw    (226,89.5) .. controls (269.12,65.98) and (286.31,68.39) .. (323.69,88.26) ;
\draw [shift={(326,89.5)}, rotate = 208.3] [fill={rgb, 255:red, 0; green, 0; blue, 0 }  ][line width=0.08]  [draw opacity=0] (8.93,-4.29) -- (0,0) -- (8.93,4.29) -- cycle    ;

% Text Node
\draw (114,186.9) node [anchor=north west][inner sep=0.75pt]    {Cone complex of $\mathsf K^\circ_\Gamma(\mathsf A)$};
% Text Node
\draw (341,185.9) node [anchor=north west][inner sep=0.75pt]    {Cone complex of $\Mbar_{0,5}$};
\end{tikzpicture}
\caption{The induced morphism on cone complexes near the locus discussed above. The dashed arrow on the right is the image of the source cone. In particular, the image of the cone is not a face of the target.}
\end{figure}
\end{example}
\qed

The phenomenon is essentially ever-present, as the simplicity of the example might indicate. 

\begin{example}
Another large source of examples come from examining maps from rational curves to a toric pair $W$, see~\cite{R15b}. We assume that $W$ is smooth in order for it to fit into our present context. We may examine the morphism
\[
\mathsf K_\Gamma(W)\to \mathsf K_\Gamma(\mathsf A^r)\to \Mbar_{0,n}. 
\]
The first map is strict and smooth, and the composite is easily seen to be logarithmically smooth~\cite[Section~3]{R15b}. The main result of loc. cit. is that the first object above can be obtained from $\Mbar_{0,n}\times W$ by blowing up, and the map to $\Mbar_{0,n}$ is a projection. It is explained there how resulting blowup can be analyzed directly using the tropical combinatorics, and one immediately sees the non-flatness. 
\end{example}

\section{Logarithmic maps to a product}

The discussion in the previous section is meant to demonstrate that the failure of the product formula stems from the failure of 
\[
\mathsf K_\Gamma(\mathsf A^r)\to \Mbar_{g,n}
\]
to be flat with reduced fibers. The issue can be fixed by using the toroidal semistable reduction theorem due to Abramovich--Karu~\cite{AK00}, and done universally using Deligne--Mumford stacks in~\cite{Mol16}. Recall that a \textit{logarithmic modification} is a proper, birational, logarithmically \'etale morphism~\cite{AW}. Note that we allow the morphism to be non-representable, so generalized root constructions are permitted in the definition. 

\begin{theorem}[Abramovich--Karu]
Let $S\to B$ be a logarithmically smooth morphism of logarithmically smooth schemes. There exist logarithmic modifications $S^\dagger\to S$ and $B^\dagger\to B$ with an induced morphism
\[
S^\dagger\to B^\dagger
\]
is flat with reduced fibers. Moreover, $B^\dagger$ can be chosen to be smooth. 
\end{theorem}

When we write ``induced morphism'' above, we mean that the modification $S^\dagger$ is a further modification of the strict transform of $S$ along $B^\dagger\to B$, and therefore maps to $B^\dagger$.

We will now apply this to to the stabilization morphism to prove the main theorem. 

\subsection{Flattening moves} We now apply the theorem above to the stabilization morphism. We first replace $\mathsf K_\Gamma(\mathsf A^r)$ with a finite type open substack $\mathsf K^{\mathsf s}_\Gamma(\mathsf A^r)$ such that the morphism
\[
\mathsf K_\Gamma(W)\to \mathsf K_\Gamma(\mathsf A^r)
\]
factors through $\mathsf K^{\mathsf s}_\Gamma(\mathsf A^r)$. Explicitly, we do this by imposing that any subcurve that is formally contracted, i.e. that is decorated with vanishing curve class, is stable as a curve with its distinguished points (nodes and markings).

\begin{proposition}
There exist logarithmic modifications 
\[
\Mbar_{g,n}^\dagger\to \Mbar_{g,n} \ \ \ \ \mathsf K^{\mathsf s}_\Gamma(\mathsf A^r)^\dagger\to \mathsf K^{\mathsf s}_\Gamma(\mathsf A^r)
\]
such that there is an induced morphism 
\[
\mathsf K^{\mathsf s}_\Gamma(\mathsf A^r)^\dagger\to\Mbar_{g,n}^\dagger
\]
which is flat with reduced fibers. 
\end{proposition}

\begin{proof}
The result is an immediate consequence of the weak semistable reduction theorem, but since we work with Artin stacks with boundary divisors having monodromy, a sufficiently general statement is not obvious in the literature, primarily because the Artin fan construction fails to be functorial~\cite{AW}. We instead explain how to maneuver the problem so the existing statements apply. 

First, perform an iterated $\Mbar_{g,n}$ by blowing up each boundary stratum once in increasing order of dimension. The resulting space $\Mbar_{g,n}^{[1]}$ does not have self-intersecting boundary strata. Now, repeat the procedure again, blowing up each boundary stratum once, including the ones introduced in the first step, in increasing order of dimension. Call the resulting space $\Mbar_{g,n}^{[2]}$. It now has the property that every irreducible component of the boundary is smooth, and an intersection of any subset of irreducible boundary divisors is connected. 

Next, perform the following fine and saturated logarithmic base change:
\[
 \mathsf K^{\mathsf s}_\Gamma(\mathsf A^r)^{[\sim]} =  \left(\mathsf K^{\mathsf s}_\Gamma(\mathsf A^r)\times_{\Mbar_{g,n}} \Mbar_{g,n}^{[2]}\right)^{\mathsf{fs}}
\]
Concretely, this is done by taking the ideal sheaf of indeterminacy for the map $\Mbar_{g,n}^{[2]}\to\Mbar_{g,n}$, pulling it back as an ideal sheaf to $\mathsf K^{\mathsf s}_\Gamma(\mathsf A^r)$ and then blowing up. 

Now define $\mathsf K^{\mathsf s}_\Gamma(\mathsf A^r)^{[2]}$ by first performing toroidal resolution of singularities to make it normal crossings, and then applying the procedure already applied for $\Mbar_{g,n}$ to make it simple normal crossings, i.e. blow up strata in increasing order of dimension and then do it again. 

Now consider the induced morphism
\[
\mathsf K^{\mathsf s}_\Gamma(\mathsf A^r)^{[2]}\to \Mbar_{g,n}^{[2]}.
\]
This is a logarithmic morphism between simple normal crossings pairs. The results of Abramovich--Karu and Molcho may now be applied. Indeed, the source and target each have cone complexes and there is an induced map
\[
\Sigma^{\mathsf s}_\Gamma(\mathsf A^r)^{[2]}\to \Sigma_{g,n}^{[2]}
\]
of these cone complexes. The procedure of Molcho may now be applied without change to produce logarithmic modifications with the required properties~\cite[Section~2 {\it \&}~3]{Mol16}. Note that the surjectivity hypothesis in the statement of Molcho's theorem enters only for the canonicity of the procedure, not the existential statement.
\end{proof}

It will be useful to have some terminology; we maintain the notation above.

\begin{definition}
A \textit{semistabilization} of a morphism
\[
\mathsf K^{\mathsf s}_\Gamma(\mathsf A^r)\to \Mbar_{g,n}
\]
is a logarithmic modification of source and target such that there is an induced morphism 
\[
\mathsf K^{\mathsf s}_\Gamma(\mathsf A^r)^\dagger\to\Mbar_{g,n}^\dagger
\]
that is flat with reduced fibers.
\end{definition}

\subsection{Virtual strict transforms and the main theorem} Any logarithmic modification
\[
\mathsf K^{\mathsf s}_\Gamma(\mathsf A^r)^\dagger\to \mathsf K^{\mathsf s}_\Gamma(\mathsf A^r)
\]
induces one for the space of maps to $W$
\[
\mathsf K_\Gamma(W)^\dagger\to \mathsf K_\Gamma(W).
\]
We summarize the observations concerning the virtual structure of this stack from~\cite[Section~3.5]{R19} in the following proposition. 

\begin{proposition}
The morphism $\pi: \mathsf K_\Gamma(W)^\dagger\to \mathsf K^{\mathsf s}_\Gamma(\mathsf A^r)^\dagger$ has a relative perfect obstruction theory. The virtual pullback of the fundamental class defines a virtual fundamental class 
\[
\left[\mathsf K_\Gamma(W)^\dagger\right]^{\mathrm{vir}}:=\pi^{!}\left[\mathsf K^{\mathsf s}_\Gamma(\mathsf A^r)^\dagger\right]
\]
for $\mathsf K_\Gamma(W)^\dagger$. Moreover, pushforward along the modification
\[
\mathsf K_\Gamma(W)^\dagger\to \mathsf K_\Gamma(W)
\]
identifies virtual classes. 
\end{proposition}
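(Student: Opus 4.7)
The plan is to identify $\mathsf K_\Gamma(W)^\dagger$ as a Cartesian base change of $\mathsf K_\Gamma(W)$ along the logarithmic modification of Artin fans, then transfer the obstruction theory and virtual class along this base change using Manolache's functorial virtual pullback.

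First, I would use the functorial framework of Section~\ref{sec: subdivisions} to identify
\[
\mathsf K_\Gamma(W)^\dagger = \mathsf K_\Gamma(W) \times_{\mathsf K_\Gamma(\mathscr A_W)} \mathsf K_\Gamma(\mathscr A_W)^\dagger
\]
as logarithmic stacks. Both modifications are pulled back from the common subdivision $\mathsf T_\Gamma(\Sigma_W)^\dagger \to \mathsf T_\Gamma(\Sigma_W)$ of tropical moduli, and Kato's interpretation of logarithmic modifications as pullbacks of monomorphisms of subfunctors (recalled in Section~\ref{sec: subdivisions}) implies that the resulting square is Cartesian in logarithmic stacks.

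Next, since $W \to \mathscr A_W$ is strict, so is the induced morphism $\mathsf K_\Gamma(W) \to \mathsf K_\Gamma(\mathscr A_W)$, and the preceding proposition endows it with a natural relative perfect obstruction theory. Strictness is preserved under base change and perfect obstruction theories pull back along arbitrary base change, so the Cartesian square endows $\pi\colon \mathsf K_\Gamma(W)^\dagger \to \mathsf K_\Gamma(\mathscr A_W)^\dagger$ with a canonical relative perfect obstruction theory. Manolache's virtual pullback then produces the virtual class
\[
[\mathsf K_\Gamma(W)^\dagger]^{\mathrm{vir}} := \pi^! [\mathsf K_\Gamma(\mathscr A_W)^\dagger],
\]
where we use the fundamental class of the target, which is available since $\mathsf K_\Gamma(\mathscr A_W)^\dagger$ is logarithmically smooth and locally of finite type.

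For the pushforward identity, write $b\colon \mathsf K_\Gamma(W)^\dagger \to \mathsf K_\Gamma(W)$ and $q\colon \mathsf K_\Gamma(\mathscr A_W)^\dagger \to \mathsf K_\Gamma(\mathscr A_W)$ for the horizontal modifications. Applying the base-change compatibility of virtual pullback with proper pushforward to the Cartesian square gives
\[
b_\star [\mathsf K_\Gamma(W)^\dagger]^{\mathrm{vir}} = \pi^! q_\star [\mathsf K_\Gamma(\mathscr A_W)^\dagger].
\]
The equality $q_\star [\mathsf K_\Gamma(\mathscr A_W)^\dagger] = [\mathsf K_\Gamma(\mathscr A_W)]$ holds because $q$ is a proper birational morphism between logarithmically smooth Artin stacks, and the preceding proposition then identifies $\pi^![\mathsf K_\Gamma(\mathscr A_W)]$ with $[\mathsf K_\Gamma(W)]^{\mathrm{vir}}$, yielding the claimed identity.

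The main obstacle lies in the last step: one must verify that Manolache's functoriality applies in the logarithmic Artin-stack setting here, since the source and target of $q$ are only logarithmically smooth rather than smooth in the classical sense, and the two instances of $\pi^!$ above refer to pullbacks along different morphisms whose obstruction theories must be identified by flat base change of the relative cotangent complex across the Cartesian square. This is the technical content of~\cite[Section~3.5]{R19}, and once it is in place the three assertions of the proposition follow formally from the diagram chase above.
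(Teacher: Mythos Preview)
The paper does not give an in-text proof of this proposition: it merely states the result and cites \cite[Section~3.5]{R19} for the details. Your proposal is a correct sketch of exactly the argument one finds there---the Cartesian identification of the modified space as a fibre product over the Artin-fan modification, base change of the relative obstruction theory along the strict vertical map, and Manolache's push--pull compatibility to compare virtual classes---so it is fully aligned with the paper's approach, only more explicit than what the paper records.
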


The virtual class $[\mathsf K_\Gamma(W)^\dagger]^{\mathrm{vir}}$ functions as a \textit{virtual strict transform} of the virtual class on $\mathsf K_\Gamma(W)$ under the modification of $\mathsf K_\Gamma^{\mathsf{s}}(\mathsf A^r)$.

\subsection{Proof of Theorem~\ref{thm: product-formula}} We examine a product $Z$ of simple normal crossings pairs $X$ and $Y$. The divisors determine maps $X\to \mathsf A^{r_X}$ and $Y\to \mathsf A^{r_Y}$. We write these targets as $\mathsf A_X$ and $\mathsf A_Y$ for notational simplicity, and $\mathsf A_Z$ for the product.

Choose semistabilizations
\[
\mathsf K_\Gamma^{\mathsf{s}}(\mathsf A_X)^\dagger\to \Mbar_\Gamma, \ \ \ \mathsf K_\Gamma^{\mathsf{s}}(\mathsf A_Y)^\dagger\to \Mbar_\Gamma
\]
of the forgetful morphisms. Note that we may choose a single modification of $\Mbar_{g,n}$ that works for both forgetful morphisms. Indeed, once a semistabilization for $X$ is achieved, we may perform the fine and saturated base change of $\mathsf K^{\mathsf{s}}_\Gamma(\mathsf A_Y)$ to that modification and repeat the procedure. The flatness and reducedness properties are stable under base change so the semistabilization for $X$ can just be pulled back.

Now consider the following commutative diagram.
\[
\begin{tikzcd}
\mathsf K_\Gamma(Z)^\dagger\arrow{d}{\varphi}\arrow{r}{\vartheta} & \mathsf P(X\times Y) \arrow{r} \arrow{d} & \mathsf K_\Gamma(X)^\dagger\times \mathsf K_\Gamma(Y)^\dagger\arrow{d}{\phi} \\
\mathsf K^{\mathsf s}_\Gamma(\mathsf A_Z)^\dagger \arrow{r}[swap]{\nu} & \mathsf P(\mathsf A_X\times \mathsf A_Y) \arrow{r}[swap]{g} \arrow{d} & \mathsf K^{\mathsf s}_\Gamma(\mathsf A_X)^\dagger\times \mathsf K^{\mathsf s}_\Gamma(\mathsf A_Y)^\dagger\arrow{d}{\psi} \\
& \Mbar_\Gamma \arrow[swap]{r}{\Delta} & \Mbar_\Gamma\times\Mbar_\Gamma.
\end{tikzcd}
\]
The squares are defined by defined by fine and santurated logarithmic fiber product, but since $\psi$ is saturated and the top verticals are strict, this coincides with the fine and saturated logarithmic fiber product. We are now led to the main result, which asserts that after these logarithmic modifications, the virtual pullbacks in the above diagram are compatible. 

\begin{theorem}
There is equality of Chow homology classes on the space $\mathsf P(X\times Y)$:
\[
\vartheta_\star [\mathsf K_\Gamma(Z)^\dagger]^{\mathrm{vir}} = \Delta^! \left[ \mathsf K_\Gamma(X)^\dagger\times \mathsf K_\Gamma(Y)^\dagger\right]^{\mathrm{vir}}.
\]
\end{theorem}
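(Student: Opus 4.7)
The plan is to reduce the identity to an Artin-fan level statement and then use the chosen subdivision to turn it into a combinatorial identification which is in fact an isomorphism. The first step is to unpack the virtual classes. For each $W\in\{X,Y,Z\}$, the preceding proposition gives $[\mathsf K_\Gamma(W)^\dagger]^{\mathrm{vir}} = \varphi_W^![\mathsf K_\Gamma(\mathscr A_W)^\dagger]$, where $\varphi_W$ denotes the strict vertical morphism to the Artin-fan mapping stack. Because $Z=X\times Y$ and $\mathscr A_Z = \mathscr A_X\times\mathscr A_Y$, the relative tangent sheaf decomposes as $T_{Z/\mathscr A_Z} = p_X^\ast T_{X/\mathscr A_X}\oplus p_Y^\ast T_{Y/\mathscr A_Y}$, and hence the relative obstruction complex $R\pi_\ast f^\ast T_{Z/\mathscr A_Z}$ splits compatibly with the two universal factor maps. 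In particular, the relative perfect obstruction theory for $\varphi$ is canonically the $\vartheta$-pullback of the relative obstruction theory for $\phi$ along the upper-left square.

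The second step is to commute the virtual pullback through the diagonal. The upper-left square of the three-row diagram is Cartesian in the logarithmic category: $\mathsf K_\Gamma(Z)^\dagger$ is obtained from $\mathsf K_\Gamma(\mathscr A_Z)^\dagger$ by strict base change to $\mathsf P(X\times Y)$. Combining this with the obstruction-theory identification above, Manolache's compatibility of virtual pullback with base change yields $\vartheta_\star\varphi^! = \phi^!\nu_\star$, and bivariant functoriality lets me commute $\phi^!$ past $\Delta^!$. The desired equality
\[
\vartheta_\star[\mathsf K_\Gamma(Z)^\dagger]^{\mathrm{vir}} = \Delta^![\mathsf K_\Gamma(X)^\dagger\times\mathsf K_\Gamma(Y)^\dagger]^{\mathrm{vir}}
\]
then reduces to the Artin-fan identity
\[
\nu_\star[\mathsf K_\Gamma(\mathscr A_Z)^\dagger] = \Delta^![\mathsf K_\Gamma(\mathscr A_X)^\dagger\times\mathsf K_\Gamma(\mathscr A_Y)^\dagger]
\]
in $A_\star(\mathsf P(\mathscr A_X\times\mathscr A_Y);\QQ)$.

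The third and hardest step is to prove this residual identity by showing that $\nu$ is an isomorphism of stacks. All three Artin-fan mapping stacks are logarithmically étale over the stack of log curves, so the statement is entirely combinatorial and amounts to showing that a $\Gamma$-minimal tropical map to $\Sigma_Z$ is the same datum as a pair of $\Gamma$-minimal tropical maps to $\Sigma_X$ and $\Sigma_Y$ over a common $\Gamma$-minimal tropical source. The final lemma of Section~\ref{sec: comb-prod} is tailored to exactly this: each morphism $\mathsf T_\Gamma(\Sigma_W)^\dagger\to\cM_\Gamma^\trop$ sends cones surjectively onto cones, which is the tropical manifestation of integrality and saturatedness for the associated log morphism. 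Under this condition, logarithmic fiber products agree cone by cone with the naive ones and $\Gamma$-minimality is closed under fiber products, so $\nu$ is indeed an isomorphism. Without the subdivision, $\nu$ would be only a proper birational morphism and $\nu_\star$ of the fundamental class could differ from the diagonal pullback by multiplicities or even missing components — this is precisely the failure mode that breaks the naive Behrend formula in logarithmic Gromov--Witten theory. Translating the cone-by-cone surjectivity of the lemma into an honest isomorphism at the Artin-fan level, rather than a proper birational morphism with correction factors, is the main technical content of the argument.
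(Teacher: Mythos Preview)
Your overall strategy---reduce to the Artin-fan level and use the subdivision to control the combinatorics---is the same as the paper's, and your first two steps are essentially correct. The genuine gap is in your third step: the morphism $\nu$ is \emph{not} an isomorphism, even after the $\dagger$-subdivisions, and the cone-surjectivity lemma does not imply that it is.

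The point is that $\mathsf P(\mathscr A_X\times\mathscr A_Y)$ is the fiber product over $\Mbar_\Gamma$, so a point of it is a pair $(\widetilde C_1\to\mathscr A_X,\ \widetilde C_2\to\mathscr A_Y)$ whose \emph{stabilized} curves agree; the two source curves $\widetilde C_1$ and $\widetilde C_2$ are typically different destabilizations of the same stable curve. A point of $\mathsf K_\Gamma(\mathscr A_Z)^\dagger$, by contrast, is a map from a \emph{single} curve to $\mathscr A_X\times\mathscr A_Y$. Passing from the pair to a single curve requires a common refinement $\widetilde C$ of $\widetilde C_1$ and $\widetilde C_2$, and tropically this common refinement need not exist over the given base cone---one must subdivide the base further (e.g.\ into the regions where an $X$-wall-crossing vertex lies to one side or the other of a $Y$-wall-crossing vertex). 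The cone-surjectivity lemma controls the maps \emph{down to} $\cM_\Gamma^\trop$, hence the flatness of $\psi$ and the Cartesianness of the right-hand squares; it says nothing about $\nu$, which is a horizontal comparison between two mapping stacks. The paper makes exactly this point: it shows only that $\nu$ is a logarithmic modification, hence proper of pure degree $1$, and then invokes Costello's pushforward theorem to conclude $\vartheta_\star[\mathsf K_\Gamma(Z)^\dagger]^{\mathrm{vir}}=\phi^{!}[\mathsf P(\mathscr A_X\times\mathscr A_Y)]$. Your argument can be repaired along the same lines: replace ``$\nu$ is an isomorphism'' by ``$\nu$ is a degree-$1$ log modification, so $\nu_\star$ preserves fundamental classes'', and separately use the flatness of $\psi$ (which \emph{does} follow from cone-surjectivity, via the equidimensionality criterion of Abramovich--Karu) to identify $[\mathsf P(\mathscr A_X\times\mathscr A_Y)]$ with $\Delta^{!}[\mathsf K_\Gamma(\mathscr A_X)^\dagger\times\mathsf K_\Gamma(\mathscr A_Y)^\dagger]$.
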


\begin{proof}
We first inspect the vertical map $\psi$, which, on each factor forgets the data of the map and stabilizes if necessary. By construction the map is flat with reduced fibers.

The flatness of $\psi$ leads to the main conclusions necessary for the proof. As already noted, the flatness and reducedness of $\psi$ implies that the two squares on the right are Cartesian in the categories of fine and saturated logarithmic stacks, as well as in ordinary stacks. The morphisms $\varphi$ and $\phi$ each give relative obstruction theories and these are compatible, and the proof of compatibility is identical to~\cite[Proposition~6]{Beh97}. As noted above, the morphism $\psi$ is a flat morphism by construction, and therefore $g$ is the flat base change of $\Delta$. In turn, $\Delta$ is the diagonal morphism on a smooth Deligne--Mumford stack, and is therefore a local complete intersection morphism. Since the property of being local complete intersection is stable under flat base change, we see $g$ is also a local complete intersection morphism. The obstruction theories given by $\Delta$ and $g$ are therefore compatible~\cite[\href{https://stacks.math.columbia.edu/tag/069I}{Tag 069I}]{stacks-project}. 

Next, note that we have observed that both $\phi$ and $\varphi$ are equipped with perfect obstruction theories. They are compatible by an identical argument as used in~\cite[Proposition~8]{Beh97}.

Finally, we consider the morphism $\nu$. Since the stack $\mathsf P(\mathsf A_X\times \mathsf A_Y)$ is a fiber product in ordinary schemes, each point determines data
\[
(C,\widetilde C_1,\widetilde C_2, \widetilde C_1\to\mathsf A_X,\widetilde C_2\to \mathsf A_Y).
\]
That is families of logarithmic curves, together with two partial destabilizations of the curve, respectively equipped with logarithmic morphisms to $\mathsf A_X$ and to $\mathsf A_Y$.\footnote{Note that we have passed to a logarithmic modification, so the families considered are not necessarily minimal. The moduli functor is described in~\cite[Section~3.7]{R19}.}

The destabilizations are either contractions of rational tails or semistable components. Precisely, given a map to $\mathsf A_Z$, a genus $0$ component is destabilized in $\widetilde C_1$ if it has curve class decoration $0$ in the $X$ direction and is either $1$-pointed or $2$-pointed, and similarly for $\widetilde C_2$ and $Y$. 

Now since $\psi$ is logarithmically smooth and flat, the fiber product $\mathsf P(\mathsf A_X\times\mathsf A_Y)$ is logarithmically smooth and flat over $\Mbar_\Gamma$, and it follows that it is irreducible. It follows that the morphism $\nu$ has pure degree $1$. In addition, $\nu$ is proper, which can be seen by explicit verification of the valuative criterion, as follows. Suppose $R$ is a valuation ring and we fix and $R$-valued point $\mathsf P(\mathsf A_X\times\mathsf A_Y)$ and a $K$-valued lift along $\nu$. We must show that there is a unique $R$-valued lift along $\nu$. Since the source and target are both irreducible, it suffices to verify the valuative criterion when the $K$-valued point in $\mathsf P(\mathsf A_X\times\mathsf A_Y)$ lies in the locus of maps from smooth domains. The $R$-valued point determines curves $\mathscr C_1$ and $\mathscr C_2$ over $\spec R$ with the same generic fiber. The special fibers of each $\mathscr C_i$ potentially contains unstable components. The complement of these components gives rise to a birational map $\mathscr C_1\dashrightarrow \mathscr C_2$. The closure of the graph of this birational map in the product is a common destabilization of $\mathscr C_1$ and $\mathscr C_2$ which maps to $\mathsf A_Z$. Moreover, every component of the source of this map has a curve class $0$ decoration by at most one of the maps to the factors $\mathsf A_X$ or $\mathsf A_Y$. By adding decorations, the stability condition ensures that we obtain a unique lift to $\mathsf K_\Gamma^{\mathsf s}(\mathsf A_Z)$. 

%There are infinitely many $R$-valued limits of these data. However, there is a unique $R$-valued lift such every component maps isomorphically onto a component in either $\widetilde C_1$ or $\widetilde C_2$ (or both). 

Summarizing, we have shown that $\Delta$ and $g$ are local complete intersection morphisms with compatible obstruction theories, the maps $\varphi$ and $\phi$ have perfect obstruction theories and are compatible, and that $\nu$ is proper and of pure of degree $1$. The product formula now follows from a standard diagram chase, using the established compatibilities, as in~\cite{Beh97,LQ18}. The fact that $\nu$ is pure degree $1$ shows that
\[
\vartheta_\star[\mathsf K_\Gamma(Z)^\dagger]^{\mathrm{vir}} = \phi^![\mathsf P(\mathsf A_X\times \mathsf A_Y)]
\]
by an application of the Costello/Herr--Wise pushforward theorem, stated in~\cite[Theorem~5.0.1]{Cos06} with errata in~\cite[Theorem~1.1]{HW22}. Note that we have verified properness of $\nu$ above which shown to be a sufficient hypothesis for Costello's theorem in loc. cit.

We also have the equality
\[
\phi^![\mathsf P(\mathsf A_X\times \mathsf A_Y)]  = \phi^!g^![\mathsf K_\Gamma(X)^\dagger\times\mathsf K_\Gamma(Y)^\dagger]
= g^![\mathsf K_\Gamma(X)^\dagger\times\mathsf K_\Gamma(Y)^\dagger]^{\mathsf{vir}},
\]
by functoriality of virtual pullbacks~\cite[Theorem~4.1]{Mano12} applied to the top square on the right column of the diagram. Since $g^!$ and $\Delta^!$ have compatible obstruction theories, the define the same pullback morphism from the Chow groups of $\mathsf K_\Gamma(X)^\dagger\times\mathsf K_\Gamma(Y)^\dagger$ to $\mathsf P(X\times Y)$. The result follows. 

\end{proof}

\bibliographystyle{siam} 
\bibliography{Products}

\end{document}